\newtheorem{theorem}{Theorem}[section]
\newtheorem{lemma}[theorem]{Lemma}
\theoremstyle{definition}
\newcommand{\eps}{\varepsilon}
\newcommand{\al}{\alpha}
\newcommand{\be}{\beta}
\newcommand{\De}{\Delta}
\newcommand{\cA}{\mathcal{A}}
\newcommand{\cB}{\mathcal{B}}
\newcommand{\cH}{\mathcal{H}}
\newcommand{\cS}{\mathcal{S}}
\newcommand{\cF}{\mathcal{F}}
\newcommand{\cG}{\mathcal{G}}
\newcommand{\cE}{\mathcal{E}}
\newcommand{\bekr}{\binom{n-1}{k-1}}
\newcommand{\bnkk}{\binom{n-k-1}{k-1}}
\renewcommand{\l}{\ensuremath{\ell}}
\newcommand{\card}[1]{\left| #1 \right|}
\newcommand{\disj}{\mathrm{dp}}
\newcommand{\Prb}{\mathbb{P}}
\newcommand{\Exp}{\mathbb{E}}
\title{Removal and stability for Erd\H{o}s--Ko--Rado}
\author{
Shagnik Das
\thanks{Freie Universit\"at Berlin, Institut f\"ur Mathematik, Arnimallee 3, 14195 Berlin, Germany.}
\thanks{{\tt shagnik@mi.fu-berlin.de}}
\and
Tuan Tran
\footnotemark[1]
\thanks{{\tt tuan@math.fu-berlin.de}. Research supported by DFG within the Research Training Grant ``Methods for Discrete Structures".}
}
\begin{document}
\maketitle

\begin{abstract}
A $k$-uniform family of subsets of $[n]$ is \emph{intersecting} if it does not contain a disjoint pair of sets.  The study of intersecting families is central to extremal set theory, dating back to the seminal Erd\H{o}s--Ko--Rado theorem of 1961 that bounds the size of the largest such families.  A recent trend has been to investigate the structure of set families with few disjoint pairs.  Friedgut and Regev proved a general removal lemma, showing that when $\gamma n \le k \le (\tfrac12 - \gamma)n$, a set family with few disjoint pairs can be made intersecting by removing few sets.

We provide a simple proof of a removal lemma for large families, showing that families of size close to $\ell \binom{n-1}{k-1}$ with relatively few disjoint pairs must be close to a union of $\ell$ stars.  Our lemma holds for a wide range of uniformities; in particular, when $\ell = 1$, the result holds for all $2 \le k < \frac{n}{2}$ and provides sharp quantitative estimates.

We use this removal lemma to answer a question of Bollob\'as, Narayanan and Raigorodskii regarding the independence number of random subgraphs of the Kneser graph $K(n,k)$.  The Erd\H{o}s--Ko--Rado theorem shows $\alpha(K(n,k)) = \binom{n-1}{k-1}$.  For some constant $c > 0$ and $k \le cn$, we determine the sharp threshold for when this equality holds for random subgraphs of $K(n,k)$, and provide strong bounds on the critical probability for $k \le \tfrac12 (n-3)$.
\end{abstract}

\section{Introduction} \label{sec:intro}

Extremal set theory, with its many connections and applications to other areas, has experienced remarkable growth in recent decades.  Inspired by one of the cornerstones of the field, the celebrated Erd\H{o}s--Ko--Rado theorem of 1961, a great deal of research concerns intersecting families.  We say a family of sets is \emph{intersecting} if it does not contain a pair of disjoint sets.  In this paper we derive a stability result for large families that are nearly intersecting, and apply it to obtain a sparse extension of the Erd\H{o}s--Ko--Rado theorem.  We begin with a brief survey of relevant results.

\subsection{Intersecting families and stability} \label{subsec:intro1}

We restrict our attention to $k$-uniform families of subsets of $[n]$.  The natural extremal question is to ask how large such a family can be if it is intersecting.  When $n < 2k$, there are no two disjoint sets, and hence $\binom{[n]}{k}$ itself is intersecting.  For $n \ge 2k$, a natural construction is to take all sets containing some fixed element $i \in [n]$.  This family, called the \emph{star with centre} $i$, contains $\binom{n-1}{k-1}$ sets, and Erd\H{o}s, Ko and Rado~\cite{ekr61} showed this is best possible.

Given the extremal result, great efforts have been made to better understand the general structure of large intersecting families.  Hilton and Milner~\cite{hm67} determined the size of the largest intersecting family that is not a subset of a star, before Frankl~\cite{fra87} extended this to determine the size of the largest intersecting family not containing too large a star.

In the years since these initial papers appeared, a series of stability results have been obtained.  Friedgut~\cite{fri08} and Dinur and Friedgut~\cite{df09} used spectral techniques to show, provided $k \le (\frac12 - \gamma)n$ for some $\gamma > 0$, any intersecting family of size close to $\binom{n-1}{k-1}$ is almost entirely contained in a star.  Keevash and Mubayi~\cite{km10} and Keevash~\cite{kee08} combined these methods with combinatorial arguments to provide similar results when $k$ is close to $\tfrac12 n$.

However, a recent trend in extremal set theory is to go beyond the Erd\H{o}s--Ko--Rado threshold and study set families that may not be intersecting, but contain few disjoint pairs.  Das, Gan and Sudakov~\cite{dgs14} studied the supersaturation problem, determining the minimum number of disjoint pairs appearing in sufficiently sparse $k$-uniform families.  Furthermore, a probabilistic variant of this supersaturation problem was introduced by Katona, Katona and Katona~\cite{kkk12}, and further studied by Russell~\cite{rus12}, Russell and Walters~\cite{rw13} and Das and Sudakov~\cite{ds14}.

Another direction that has been pursued is the transferral of the Erd\H{o}s--Ko--Rado theorem to the sparse random setting.  This study was initiated by Balogh, Bohman and Mubayi~\cite{bbm09}, who asked when the largest intersecting subfamily of a random $k$-uniform hypergraph is the largest star.  Progress on this problem has been made in subsequent papers by Gauy, H\`an and Oliveira~\cite{gho14}, Balogh, Das, Delcourt, Liu and Sharifzadeh~\cite{bddls14} and Hamm and Kahn~\cite{hk14a,hk14b}.  An alternative version of a sparse Erd\H{o}s--Ko--Rado theorem, which we shall discuss in greater detail in Section~\ref{subsec:intro3}, was introduced by Bollob\'as, Narayanan and Raigorodskii~\cite{bnr14}.

\subsection{Removal lemmas for disjoint pairs} \label{subsec:intro2}

As these new problems go beyond the Erd\H{o}s--Ko--Rado threshold, we require more robust forms of stability that apply not only to intersecting families, but also to families with few disjoint pairs.  This motivated the search for a \emph{removal lemma} that would show one can remove few sets from any family with a small number of disjoint pairs to obtain an intersecting family.  Such a result would be a set-theoretic analogue of the graph removal lemmas that have found a wide range of applications in extremal graph theory, details of which are in the survey of Conlon and Fox~\cite{cf13}.

Friedgut and Regev~\cite{fr14} proved the first such removal lemma, stated below.

\begin{theorem}[Friedgut--Regev] \label{thm:frremoval}
Let $\gamma > 0$, and let $k$ and $n$ be positive integers satisfying ${\gamma n \le k \le (\tfrac12 - \gamma)n}$.  Then for every $\eps > 0$ there is a $\delta > 0$ such that any family $\cF \subset \binom{[n]}{k}$ with at most $\delta \card{\cF} \binom{n-k}{k}$ disjoint pairs can be made intersecting by removing at most $\eps \binom{n-1}{k-1}$ sets from $\cF$.
\end{theorem}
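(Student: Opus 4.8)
\emph{Proof idea.}
I would begin with two routine reductions. If $\card{\cF}\le\eps\bekr$ we may simply delete all of $\cF$ and are done, so assume $\card{\cF}>\eps\bekr$. For an upper bound on $\card{\cF}$, regard $\cF$ as a set of vertices of the Kneser graph $K(n,k)$ on $\binom{[n]}{k}$, whose edges are precisely the disjoint pairs: this is a $\binom{n-k}{k}$-regular graph whose least eigenvalue equals $-\bnkk$, and $\bnkk/\binom{n-k}{k}=\tfrac{k}{n-k}\le\tfrac{1/2-\ga}{1/2+\ga}<1$. Writing $A$ for the adjacency matrix, $f$ for the indicator of $\cF$, and $g$ for the projection of $f$ off the constants, we have that $\langle f,Af\rangle$ is twice the number of disjoint pairs, while $\langle f,Af\rangle=\tfrac{\card{\cF}^{2}}{\binom nk}\binom{n-k}{k}+\langle g,Ag\rangle\ge\tfrac{\card{\cF}^{2}}{\binom nk}\binom{n-k}{k}-\bnkk\|g\|^{2}$; combining these with $\|g\|^{2}=\card{\cF}-\card{\cF}^{2}/\binom nk$ and rearranging (this is just the Hoffman bound, tight for Erd\H{o}s--Ko--Rado, made robust) yields $\card{\cF}\le\big(1+\tfrac{2\de}{\ga}\big)\bekr$. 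Thus we may assume that the density of $\cF$ is bounded both below and above by constants depending only on $\eps$ and $\ga$.

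The core of the argument is a \emph{junta approximation}, which for families with few disjoint pairs is meant to play the role that the theorem of Dinur and Friedgut~\cite{df09} --- every intersecting family is essentially contained in a junta --- plays for intersecting families: for every $\eta>0$ there are $j=j(\ga,\eta)$ and $\de>0$ such that any $\cF$ as above with at most $\de\card{\cF}\binom{n-k}{k}$ disjoint pairs satisfies $\card{\cF\setminus\cI}\le\eta\bekr$ for some $j$-junta $\cI=\{A\in\binom{[n]}{k}:A\cap J\in\cB\}$, $\card{J}\le j$, $\cB\subseteq2^{J}$, which can moreover be taken to be intersecting. I would establish this along the lines of Dinur--Friedgut. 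In the Fourier basis adapted to the Johnson scheme, the normalised adjacency operator of $K(n,k)$ acts, up to lower-order terms, as the noise operator with parameter $-\tfrac{k}{n-k}$, which the hypothesis on $k$ keeps bounded away from $\pm1$; so having few disjoint pairs forces $f$ to be nearly annihilated by this operator, and feeding this, together with the fact that $f$ is Boolean, into hypercontractivity --- the Bonami--Beckner inequality on the $p$-biased slice, legitimate since $p=\tfrac kn$ lies in $[\ga,\tfrac12-\ga]$ --- shows that the Fourier weight of $f$ is concentrated on levels at most $d=d(\ga,\eta)$; the combinatorial bootstrap of~\cite{df09} then promotes this to closeness to a $j$-junta. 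That $\cI$ may be taken intersecting uses the hypothesis a second time: were two disjoint sets $S,T$ to lie in the base $\cB$, then --- the cells $\{A:A\cap J=S\}$ and $\{A:A\cap J=T\}$ each having size $\Om_{j,\ga}(\bekr)$ and hence being occupied by a positive fraction of $\cF$ --- the family $\cF$ would already contain $\Om_{j,\ga}(\bekr\binom{n-k}{k})$ disjoint pairs, contradicting the hypothesis; hence $\cB$, and with it $\cI$, is intersecting.

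With the intersecting $j$-junta $\cI$ in hand, the conclusion is immediate: $\cF\cap\cI\subseteq\cI$ is intersecting, so deleting the at most $\card{\cF\setminus\cI}\le\eta\bekr$ sets of $\cF\setminus\cI$ from $\cF$ leaves an intersecting family; choosing $\eta$ (and hence $j$), and then $\de$, sufficiently small in terms of $\eps$ and $\ga$ completes the proof. The junta approximation is where essentially all the work lies, and is the step I expect to be the main obstacle --- in particular, threading the quantitative dependencies through the hypercontractive estimates is delicate, and the crude spectral bound used above cannot be substituted for it, since, being the Hoffman bound, it degenerates to no information the moment $\card{\cF}$ falls below $\bekr$. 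This is, in outline, the route taken by Friedgut and Regev~\cite{fr14}.
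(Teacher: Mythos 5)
The paper does not prove this theorem: Theorem~\ref{thm:frremoval} is stated as prior work, attributed to Friedgut and Regev and cited only as a personal communication, and the paper's own contribution is the different removal lemma, Theorem~\ref{thm:removal}, which is proved under different hypotheses by a different argument. So there is no in-paper proof to check you against; I can only assess plausibility and contrast with what the paper does for its related result. Your sketch is consistent with the route Friedgut and Regev actually take, and the preliminary reductions are sound. The Hoffman computation is correct: from $2\,\disj(\cF)=\langle f,Af\rangle\ge\binom{n-k}{k}\frac{\card{\cF}^2}{\binom{n}{k}}-\bnkk\left(\card{\cF}-\frac{\card{\cF}^2}{\binom{n}{k}}\right)$ and the hypothesis $\disj(\cF)\le\de\card{\cF}\binom{n-k}{k}$ one does indeed get $\card{\cF}\le\big(1+\tfrac{2\de}{\ga}\big)\bekr$. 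The observation that the normalised Kneser adjacency operator acts on level $i$ roughly as $\big(-\tfrac{k}{n-k}\big)^i$, i.e.\ like a noise operator with rate bounded away from $\pm 1$, is the correct reason that slice hypercontractivity is available under $\ga n\le k\le(\tfrac12-\ga)n$. The step upgrading ``low-level Fourier weight'' to an intersecting junta, and the argument that two disjoint sets in the base $\cB$ of the junta would force $\Omega_{j,\ga}\big(\bekr\binom{n-k}{k}\big)$ disjoint pairs in $\cF$, requires the junta cells to be occupied with constant density by $\cF$; this holds in the Dinur--Friedgut construction where $\cB$ is exactly the collection of cells in which $\cF$ has density above a threshold, but it is worth making explicit that you are relying on that specific construction rather than merely on $\card{\cF\setminus\cI}$ being small.

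By contrast, the paper's own removal lemma (Theorem~\ref{thm:removal}) is proved without any hypercontractivity or junta machinery. There, one assumes from the outset that $\card{\cF}\approx\ell\bekr$, projects the indicator $f$ onto the $\lambda_0$- and $\lambda_1$-eigenspaces of the Kneser graph to show (Lemma~\ref{lem:spectral}) that $f$ is close to an affine function, and then applies Filmus's slice analogue of the Friedgut--Kalai--Naor theorem (Theorem~\ref{thm:filmus}) to pass from affine-closeness to closeness to a union of stars, finishing with a short case analysis using the size and disjoint-pair count of $\cF$. The trade-off is informative: your Friedgut--Regev-style argument requires no hypothesis on $\card{\cF}$ but needs $k\ge\ga n$ (slice hypercontractivity degrades as $k/n\to 0$) and gives only qualitative $\eps$--$\de$ control; the paper's spectral-plus-FKN argument needs $\card{\cF}$ close to $\ell\bekr$ but works for all $2\le k<\tfrac{n}{2}$ when $\ell=1$ and yields sharp quantitative bounds. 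As you correctly point out, the crude Hoffman bound in your first step gives no information once $\card{\cF}$ drops below $\bekr$, which is exactly why the general Friedgut--Regev statement cannot be obtained from the paper's spectral method alone.
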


This is a very general result that holds regardless of the size or structure of the nearest intersecting family.  However, for extremal applications, one is typically interested in the case when $\card{\cF} \approx \binom{n-1}{k-1}$.  For example, Gauy, H\`an and Oliveira required such a lemma in~\cite{gho14}, coupling Theorem~\ref{thm:frremoval} with known stability results to show that a family of size close to $\binom{n-1}{k-1}$ with few disjoint pairs must be close in structure to a star.  They further asked if such a result also holds for $k=o(n)$.  Our main theorem shows this is indeed the case.  Theorem~\ref{thm:removal} provides a removal lemma that holds whenever $\cF$ has size close to a union of $\ell$ full stars and has relatively few disjoint pairs.  Moreover, when $\ell = 1$, this holds for the full range of $2 \le k < \frac{n}{2}$.

\begin{theorem} \label{thm:removal}
There is an absolute constant $C > 1$ such that if $n,k$ and $\l$ are positive integers satisfying $n>2k\l^2$, and $\cF \subset \binom{[n]}{k}$ is a family of size $\card{\cF} = \left(\l - \al \right) \bekr$ with at most $\left(\binom{\l}{2}+\be\right) \bekr \bnkk$ disjoint pairs, where $\max \left(2\l\card{\al}, \card{\be} \right) \le \frac{n-2k}{(20C)^2n}$, then there is a family $\cS$ that is the union of $\l$ stars satisfying $\card{\cF \De \cS} \le C((2\l-1)\al + 2\be)\frac{n}{n-2k}\bekr$.
\end{theorem}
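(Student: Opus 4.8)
The plan is to take the $\l$ elements of largest degree as the candidate centres, reduce the whole statement to an upper bound on the number of sets of $\cF$ avoiding all of them, and extract that bound from a single supersaturation-type lower bound on the number of disjoint pairs. Write $M=\bekr$, write $\disj(\cF)$ for the number of disjoint pairs in $\cF$, let $i_1,\dots,i_\l$ be elements of largest degree, put $C_0=\{i_1,\dots,i_\l\}$, let $\cS$ be the union of the stars with centres in $C_0$, and let $\mathcal{R}=\{A\in\cF:A\cap C_0=\emptyset\}$. Since each degree $d_{i_a}\le M$ and $\sum_{a}d_{i_a}=\sum_{A\in\cF}\card{A\cap C_0}\ge\card{\cF}-\card{\mathcal{R}}=(\l-\al)M-\card{\mathcal{R}}$, we get $\sum_a(M-d_{i_a})\le\al M+\card{\mathcal{R}}$, and hence $\card{\cF\De\cS}\le\card{\mathcal{R}}+\sum_a(M-d_{i_a})\le 2\card{\mathcal{R}}+\al M$. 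Thus it suffices to prove $\card{\mathcal{R}}=O\bigl((\l\al+\be)\tfrac{n}{n-2k}\bigr)M$, after which the conclusion of the theorem, with the weights $2\l-1$ and $2$, follows by comparing constants.

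To bound $\card{\mathcal{R}}$ we lower-bound $\disj(\cF)$. First, if the chosen stars are close to full then the ordered pairs $(A,B)$ with $A\ni i_a$, $B\ni i_b$, $a\neq b$, and $A\cap B=\emptyset$ already number about $2\binom{\l}{2}M\bnkk$: this is the unavoidable baseline matching the constant $\binom{\l}{2}$ in the hypothesis. Second, fix $B\in\mathcal{R}$ and a centre $i_a$: exactly $\bnkk$ of the $(k-1)$-subsets of $[n]\setminus i_a$ are disjoint from $B$, so at least $\bnkk-\card{\{A'\in\mathcal{M}_a:A'\cap B=\emptyset\}}$ sets of the form $\{i_a\}\cup A'$ in $\cF$ are disjoint from $B$, where $\mathcal{M}_a\subseteq\binom{[n]\setminus i_a}{k-1}$ is the family of $(k-1)$-sets missing from the $i_a$-star. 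Averaging over $a$ (a set disjoint from $B$ contains at most $\l$ centres) and summing over $B\in\mathcal{R}$, and recalling that each cross pair is counted in both orders in $2\disj(\cF)$, gives
\[
\disj(\cF)\ \ge\ (1-o(1))\tbinom{\l}{2}M\bnkk\ +\ \card{\mathcal{R}}\,\bnkk\ -\ \tfrac{1}{\l}\sum_{a}\sum_{A'\in\mathcal{M}_a}\card{\{B\in\mathcal{R}:A'\cap B=\emptyset\}}.
\]
Comparing with $\disj(\cF)\le\bigl(\binom{\l}{2}+\be\bigr)M\bnkk$ and using $\sum_a\card{\mathcal{M}_a}\le\al M+\card{\mathcal{R}}$ then yields the required bound on $\card{\mathcal{R}}$ — provided one can show the ``correlation correction'' double sum (and the analogous loss hidden in the $o(1)$ baseline term) is genuinely smaller than the product of the sizes $\sum_a\card{\mathcal{M}_a}$ and $\card{\mathcal{R}}$. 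The case $\l=1$ already contains the sharp form of this mechanism; the general case can alternatively be deduced from it by peeling off one star at a time, at each stage checking that the transformed size and disjoint-pair parameters stay within the tolerance $\tfrac{n-2k}{(20C)^2n}$, and it is this bookkeeping that forces the hypothesis $n>2k\l^2$.

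The main obstacle is exactly the bound on the correlation correction $\sum_{A'\in\mathcal{M}_a}\card{\{B\in\mathcal{R}:A'\cap B=\emptyset\}}$: the trivial estimate $\card{\mathcal{M}_a}\cdot\card{\mathcal{R}}$ is far too lossy, and the argument collapses when $k$ is close to $\tfrac n2$, since then $\bnkk$ is exponentially smaller than $M$ while $\card{\mathcal{M}_a}$ and $\card{\mathcal{R}}$ need not be, so the per-set gains $\bnkk-\card{\{A'\in\mathcal{M}_a:A'\cap B=\emptyset\}}$ go negative. The resolution is that two dense uniform families on a ground set of size barely above $2k$ cannot be ``disjointness-aligned'': a family of $\card{\mathcal{M}_a}$ many $(k-1)$-sets with $\card{\mathcal{M}_a}$ large must spread over nearly all of the $n-1$ available coordinates, leaving too little room for a comparably large family of $k$-sets to be disjoint from all of them. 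Making this quantitative — equivalently, controlling the cancellations in the inclusion--exclusion expansion of $\disj(\cF)$ — is the technical heart of the proof, and is where a Kruskal--Katona / shadow estimate, and the factor $\tfrac{n}{n-2k}$, enter. Beyond this, the remaining work is arithmetic: carrying all the error terms (and, in the inductive version, the perturbed parameters) so that they combine into the stated bound with a single absolute constant $C$.
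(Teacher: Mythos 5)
Your proposal takes a genuinely different, purely combinatorial route: pick the $\l$ highest-degree elements as candidate centres, reduce the symmetric difference to $2\card{\mathcal{R}}+\al M$ (where $\mathcal{R}$ is the subfamily avoiding all centres), and bound $\card{\mathcal{R}}$ by a supersaturation-style lower bound on $\disj(\cF)$. The framing up to and including $\card{\cF\De\cS}\le 2\card{\mathcal{R}}+\al M$ is correct, and the reduction to $\card{\mathcal{R}}=O((\l\al+\be)\tfrac{n}{n-2k})M$ is the right target. The paper instead works spectrally: it projects the characteristic function of $\cF$ onto the eigenspaces of the Kneser graph, uses the known eigenvalues (Lov\'asz) and the disjoint-pair count to show $f$ is $\eps$-close to an affine function with $\eps=((2\l-1)\al+2\be)\frac{k}{n-2k}$, and then invokes Filmus's Friedgut--Kalai--Naor theorem for slices to conclude $f$ is close to $\max_{i\in S}x_i$; a short case analysis pins down $\card{S}=\l$. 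Your approach, if it could be completed, would be more elementary and would give more direct control of the constants; the paper's approach absorbs the dependence on $k$ being close to $n/2$ automatically into the eigenvalue gap.

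However, there is a genuine and acknowledged gap, and it sits precisely where the whole difficulty of the theorem lies. Your lower bound on $\disj(\cF)$ has a ``correlation correction'' term $\sum_{a}\sum_{A'\in\mathcal{M}_a}\card{\{B\in\mathcal{R}:A'\cap B=\emptyset\}}$, and you correctly observe that the trivial bound $\card{\mathcal{M}_a}\cdot\card{\mathcal{R}}$ is useless: indeed, as $k\to n/2$ we have $\bnkk\ll M$, so any per-pair counting in which $\mathcal{M}_a$ and $\mathcal{R}$ can be arbitrary subfamilies of the relevant sizes fails, since the ``gain'' $\bnkk-\card{\{A'\in\mathcal{M}_a:A'\cap B=\emptyset\}}$ can go negative. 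You then assert that the resolution is some quantitative ``non-alignment'' statement for dense uniform families on a tight ground set, to come from Kruskal--Katona/shadow estimates, but you do not state or prove such a lemma, and it is far from clear that one of the needed strength (tight to within the absolute constant $C$ and with the exact $\frac{n}{n-2k}$ dependence, uniformly for all $k<n/2$) exists along these lines. This is not bookkeeping; it is the technical heart of the theorem, and in the paper it is done in one stroke by the spectral gap between $\lambda_1$ and $\lambda_3$ together with Filmus's theorem. The proposed ``peel off one star at a time'' induction is likewise not carried out, and the claim that it is what forces $n>2k\l^2$ does not match the role that hypothesis actually plays in the paper (there it rules out the complement-of-star cases in the Filmus classification). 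As written, the proposal is a plausible plan with the central estimate missing.
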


Another feature of Theorem~\ref{thm:removal} is that, despite its simple proof, it provides quantitative control that is often sharp up to the constant.  The distance from $\cF$ to a union of $\ell$ stars is measured in terms of its size (parametrised by $\alpha$), the number of disjoint pairs (parametrised by $\beta$), and how close $k$ is to $\tfrac12 n$.  When $\ell = 1$, taking $\beta = 0$ gives a stability result for intersecting families, and the bounds sharpen those given by Keevash and Mubayi~\cite{km10} and Keevash~\cite{kee08}.

For positive $\beta$, the bounds remain sharp up to the constant.  If $k$ is bounded away from $\frac{n}{2}$, then one may take a star and add $\alpha \binom{n-1}{k-1}$ sets from another star to obtain a family of size $(1 + \alpha) \binom{n-1}{k-1}$ with $\alpha \binom{n-1}{k-1} \binom{n-k-1}{k-1}$ disjoint pairs that is $\alpha \binom{n-1}{k-1}$-far from a star.  On the other hand, if $t = n - 2k = o(n)$, consider the anti-star $\binom{[n-1]}{k}$.  This has size $\left(1 + \frac{t}{k} \right) \binom{n-1}{k-1}$, contains approximately $\frac{t}{n} \binom{n-1}{k-1} \binom{n-k-1}{k-1}$ disjoint pairs, and yet is approximately $\binom{n-1}{k-1}$-far from a star.

When $\ell \ge 2$, $\cF$ is much larger than the Erd\H{o}s--Ko--Rado bound, and hence we would expect $\cF$ to contain many disjoint pairs.  Das, Gan and Sudakov~\cite{dgs14} have shown that, provided $n$ is sufficiently large, a union of $\ell$ stars, which has approximately $\binom{\ell}{2} \binom{n-1}{k-1} \binom{n-k-1}{k-1}$ disjoint pairs, minimises the number of disjoint pairs in set families of this size.  Theorem~\ref{thm:removal} provides stability for this supersaturation result, showing that families of comparable size with a similar number of disjoint pairs must be close in structure to a union of $\ell$ stars.

Finally, while we require $n > 2k$ when $\ell = 1$, we can do a bit better when $\ell$ is large: as $\ell$ tends to infinity, the bound on $n$ can be lowered to $n > \left( \tfrac12 + o(1) \right) k \ell^2$.

\subsection{Erd\H{o}s--Ko--Rado for sparse Kneser subgraphs} \label{subsec:intro3}

To demonstrate the usefulness of Theorem~\ref{thm:removal}, we shall apply it to a problem of Bollob\'as, Narayanan and Raigorodskii~\cite{bnr14} regarding an extension of the Erd\H{o}s--Ko--Rado theorem to the sparse random setting.  To define the problem at hand, we first need to introduce the Kneser graph and its connection to the Erd\H{o}s--Ko--Rado theorem.

Given integers $1 \le k \le \tfrac12 n$, the Kneser graph $K(n,k)$ is defined on the vertex set $V = \binom{[n]}{k}$, with two $k$-sets $F, G \in \binom{[n]}{k}$ adjacent in $K(n,k)$ if and only if $F \cap G = \emptyset$.  Since edges of $K(n,k)$ denote disjoint pairs in $\binom{[n]}{k}$, it follows that independent sets of $K(n,k)$ correspond directly to intersecting families in $\binom{[n]}{k}$.  Thus the Erd\H{o}s--Ko--Rado theorem, viewed from the perspective of the Kneser graph, shows $\alpha(K(n,k)) = \binom{n-1}{k-1}$ when $n \ge 2k$.

Bollob\'as, Narayanan and Raigorodskii~\cite{bnr14} transferred the Erd\H{o}s--Ko--Rado theorem to the random setting by considering not the entire Kneser graph $K(n,k)$, but rather random subgraphs of it.  Given some probability $0 \le p \le 1$, let $K_p(n,k)$ denote the subgraph of $K(n,k)$ where every edge is retained independently with probability $p$.  As $K_p(n,k) \subseteq K(n,k)$, we clearly have $\alpha(K_p(n,k)) \ge \alpha(K(n,k)) = \binom{n-1}{k-1}$.  They then asked for which $p$ we have equality.

In their paper, they showed the Erd\H{o}s--Ko--Rado theorem is surprisingly robust when $k$ is not too large with respect to $n$.  In other words, we almost surely have $\alpha(K_p(n,k)) = \binom{n-1}{k-1}$ even for very small probabilities $p$ (and thus very sparse subgraphs of $K(n,k)$).  Furthermore, they exhibited a sharp threshold for when this sparse Erd\H{o}s--Ko--Rado theorem holds.

\begin{theorem}[Bollob\'as--Narayanan--Raigorodskii] \label{thm:bnr14}
Fix $\eps > 0$ and suppose $2 \le k = o(n^{1/3})$.  Let
\[ p_0 = \frac{(k+1) \log n - k \log k}{\binom{n-1}{k-1}}. \]
Then, as $n \rightarrow \infty$,
\[ \Prb \left( \alpha(K_p(n,k)) = \binom{n-1}{k-1} \right) \rightarrow \left\{ \begin{array}{ll}
	0 & \textrm{if } p \le (1 - \eps) p_0  \\
	1 & \textrm{if } p \ge (1 + \eps) p_0 
\end{array} \right. . \]
Moreover, for $p \ge (1 + \eps)p_0$, with high probability the largest independent sets are stars.
\end{theorem}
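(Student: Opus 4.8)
We prove the two halves separately; both rest on the remark that, for $i\notin F$, the set $\cS_i\cup\{F\}$ is independent in $K_p(n,k)$ precisely when all $\bnkk$ edges joining $F$ to the star $\cS_i$ have been deleted. For the lower bound, when $p\le(1-\eps)p_0$ I would show that some star can be extended: let $Y$ be the number of pairs $(i,F)$ with $i\notin F$ and $\cS_i\cup\{F\}$ independent, so that $Y\ge1$ implies $\alpha(K_p(n,k))>\bekr$. Here $\Exp[Y]=n\binom{n-1}{k}(1-p)^{\bnkk}$, and since $\bnkk=(1-o(1))\bekr$ (valid because $k=o(n^{1/3})$), a Stirling estimate gives $\log\Exp[Y]=(1+o(1))\,\eps\,\big((k+1)\log n-k\log k\big)\to\infty$. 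For the variance, the crucial point is that for a \emph{fixed} centre $i$ the indicators $\mathbf{1}[\cS_i\cup\{F\}\text{ independent}]$, $F\not\ni i$, depend on pairwise disjoint edge sets and so are mutually independent; the only configurations $(i,F)\ne(i',F')$ sharing an edge have $F=F'$ or form a ``swap'' ($i\in F'$, $i'\in F$, $F\cap F'=\emptyset$), and a short calculation shows these contribute an $o(1)$ fraction of $\Exp[Y]^2$ to the variance. Hence $\mathrm{Var}(Y)=o(\Exp[Y]^2)$ and Chebyshev yields $Y\ge1$ with high probability (for large $k$ a single centre already suffices).

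For the upper bound, the same computation at $p\ge(1+\eps)p_0$ gives $\log\Exp[Y]=-(1+o(1))\,\eps\,\big((k+1)\log n-k\log k\big)\to-\infty$, so with high probability no star is extendable; more generally, $n\binom{n-1}{k}\,\Prb\!\big[\mathrm{Bin}(\bnkk,p)\le b\big]\to0$ for $b$ up to roughly $\tfrac{\eps k\log n}{\log k+\log\log n}$, so with high probability every $k$-set $F$ sends at least $b$ surviving edges into every star $\cS_i$ with $i\notin F$. Now take a maximum independent set $\cI$ with $\card{\cI}\ge\bekr$. If $\cI$ is intersecting then Erd\H{o}s--Ko--Rado forces $\card{\cI}=\bekr$ and Hilton--Milner (using $n>2k$) forces $\cI$ to be a star, which is the claim. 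Otherwise let $\cR$ be a minimum vertex cover of the disjointness graph on $\cI$ and set $\cI'=\cI\setminus\cR$, an intersecting family of size $\ge\bekr-\card{\cR}$; if $\card{\cR}$ is small enough, Theorem~\ref{thm:removal} with $\l=1,\be=0$ puts $\cI'$ within $O(\card{\cR})$ of some star $\cS_j$, so $\cI=(\cS_j\setminus\cB)\cup\cA$ with $\cA\ne\emptyset$ and $\card{\cA},\card{\cB}=O(\card{\cR})$. For $F\in\cA$ we have $j\notin F$, and independence of $\cI$ forces each surviving edge from $F$ into $\cS_j$ to land in $\cB$, so $F$ meets $\cS_j$ in $\le\card{\cB}=O(\card{\cR})$ surviving edges; provided $\card{\cR}=O\!\big(\tfrac{\eps k\log n}{\log k+\log\log n}\big)$ this has probability $\Prb[\mathrm{Bin}(\bnkk,p)\le O(\card{\cR})]$, which summed over the at most $n\binom{n-1}{k}$ choices of $(j,F)$ is $o(1)$. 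Thus with high probability this situation does not arise, and $\cI$ must be a star.

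The main obstacle is the complementary case where $\cR$ is large, i.e.\ $\cI$ is far from intersecting, so that Theorem~\ref{thm:removal} --- which only controls families near the extremal configuration --- does not apply. One cannot union bound over all independent sets: there are $2^{\Theta(\binom{n}{k})}$ of them, while such a set survives in $K_p$ only with probability $(1-p)^{d}$, where the number $d$ of disjoint pairs it contains need not be large enough to compensate. The resolution is to exploit that $K(n,k)$ is \emph{almost complete} for $k=o(n^{1/3})$ --- every vertex has degree $(1-o(1))\binom{n}{k}$ --- so that an independent set of size $\ge\bekr$ is either genuinely close to a star or spans $\Theta\!\big(\binom{\card{\cI}}{2}\big)$ edges of $K(n,k)$, far more than can plausibly be deleted. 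Parametrising $\cI$ by the maximal star-overlap $\max_j\card{\cS_j\cap\cI}$ and interpolating between these regimes --- bounding, at each value of the parameter, the number of resulting structured families against the probability that all of their disjoint pairs are absent --- is where the real work lies and where the hypothesis $k=o(n^{1/3})$ is used; I expect this multi-scale estimate to be the most delicate part of the argument.
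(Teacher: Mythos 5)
The paper does not prove Theorem~\ref{thm:bnr14} itself: it is cited from Bollob\'as, Narayanan and Raigorodskii. What the paper does prove is the refinement Theorem~\ref{thm:sparseEKR}, via the removal lemma Theorem~\ref{thm:removal}, and that proof is the natural comparison point for your proposal. Your lower bound is correct but heavier than needed: for one fixed centre the events $\{\cE_F : F\not\ni 1\}$ depend on pairwise disjoint edge sets, so they are already mutually independent, and $\Prb(\cap_F\overline{\cE_F}) = \bigl(1-(1-p)^{\bnkk}\bigr)^{\binom{n-1}{k}}$ can be shown to be $o(1)$ directly. No second moment or swap calculation is required; you note this alternative in passing, and it is what the paper uses.

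The upper bound has a genuine gap, and it comes from a misreading of Theorem~\ref{thm:removal}. You assert that the removal lemma ``only controls families near the extremal configuration'' and therefore cannot handle an independent set that is far from every star, leaving that case to an unspecified multi-scale estimate. But the lemma is an implication whose contrapositive is exactly what you need: if $\card{\cF}=\bekr$ and $\cF$ is at distance greater than $2C\beta\frac{n}{n-2k}\bekr$ from \emph{every} star, for any admissible $\beta\le\frac{n-2k}{(20C)^2n}$, then $\disj(\cF) > \beta\bekr\bnkk$. Taking $\beta$ at its maximum admissible value shows that any family of size $\bekr$ not within $\frac{1}{200C}\bekr$ of a star contains a constant fraction of the maximum possible number of disjoint pairs, which is more than enough to beat the trivial union bound over all $\binom{\binom{n}{k}}{\bekr}$ candidate families; for intermediate distances $i$ one takes $\beta$ proportional to $i$ and union-bounds over the $i$ sets added and removed from a star. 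This is precisely the decomposition the paper uses (the variables $Y_i$, with thresholds $t_0$ and $t_1$), and it dissolves the obstacle you identify --- there is no need to appeal to $K(n,k)$ being almost complete for $k=o(n^{1/3})$. Your parametrization by a minimum vertex cover $\cR$ is a reasonable substitute for the paper's $i=\card{\cS\setminus\cI}$ in the near-star regime, but as written the proposal does not establish the theorem because the far-from-star case is left open.
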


While observing that we may take $\eps = O(k^{-1})$, they conjectured that the result should continue to hold provided $k = o(n)$.  Partial progress was made by Balogh, Bollob\'as and Narayanan~\cite{bbn14}, who showed that for every $\gamma > 0$ there is some constant $c(\gamma) > 0$ such that if $k \le (\tfrac12 - \gamma) n$ and $p \ge \binom{n-1}{k-1}^{-c(\gamma)}$, then $\alpha(K_p(n,k)) = \binom{n-1}{k-1}$ with high probability.

By applying Theorem~\ref{thm:removal}, we obtain sharper results for large $k$, as given in the theorem below.  For these larger values of $k$, it is convenient to present the critical probability in a different form to that of Theorem~\ref{thm:bnr14}; note that $p_c$ below is asymptotically equal to $p_0$ above when $k = o(n^{1/2})$.

\begin{theorem} \label{thm:sparseEKR}
There is an absolute constant $C > 0$ such that the following holds.  Let $k$ and $n$ be integers with $\omega(1) = k \le \tfrac12 (n-3)$, let $\eps = \omega(k^{-1})$, and set
\[ p_c = \frac{\log \left( n \binom{n-1}{k} \right) }{\binom{n-k-1}{k-1}}. \]
Then, as $n \rightarrow \infty$, $\Prb \left( \alpha(K_p(n,k)) = \binom{n-1}{k-1} \right) \rightarrow 0$ if ${p \le (1 - \eps) p_c}$.

For $k \le \frac{n}{6C}$, if ${p \ge (1 + \eps) p_c}$, with high probability $\alpha(K_p(n,k)) = \binom{n-1}{k-1}$ and the stars are the only maximum independent sets.  For $k \le \tfrac12 (n-3)$, the same conclusion holds for $p \ge \frac{2Cn}{n-2k} p_c$.
\end{theorem}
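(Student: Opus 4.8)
The plan is to prove the ``$0$-statement'' ($p\le(1-\eps)p_c$ forces $\alpha(K_p(n,k))>\binom{n-1}{k-1}$ with high probability) and the ``$1$-statement'' ($p$ above the stated thresholds forces $\alpha(K_p(n,k))=\binom{n-1}{k-1}$ with stars as the only extremal families) separately; the $1$-statement is where Theorem~\ref{thm:removal} does its work.

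For the $0$-statement I would show that with high probability some star can be strictly enlarged. The natural quantity is $Z$, the number of pairs $(y,G)$ with $y\in[n]$ and $G\in\binom{[n]\setminus\{y\}}{k}$ such that $G$ has no $K_p$-neighbour inside the star $\cS_y$; each such pair yields a vertex that can be added to $\cS_y$ without creating an edge, so $Z>0$ forces $\alpha(K_p(n,k))>\binom{n-1}{k-1}$. A fixed pair is counted with probability $q:=(1-p)^{\binom{n-k-1}{k-1}}$, so $\Exp Z=n\binom{n-1}{k}q$; since $p\le(1-\eps)p_c$ one has $q\ge(n\binom{n-1}{k})^{-(1-\eps)(1+o(1))}$, whence $\Exp Z\ge(n\binom{n-1}{k})^{\eps-o(1)}\to\infty$ (using $\eps\log\binom{n-1}{k}\ge\eps k\log 2\to\infty$, valid as $\eps=\omega(k^{-1})$). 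One then runs the second-moment method: two pairs $(y,G)$ and $(y',G')$ give independent events unless $G=G'$ (their neighbourhoods inside $\cS_y\cup\cS_{y'}$ overlap in $\binom{n-k-2}{k-2}$ sets) or $G\cap G'=\emptyset$ with $y'\in G$ and $y\in G'$ (they share the single edge $\{G,G'\}$), and estimating the resulting contributions to $\Var Z$ gives $\Prb(Z=0)=o(1)$.

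For the $1$-statement I would feed Theorem~\ref{thm:removal} (with $\ell=1$) the following dichotomy. Let $\cI$ be independent in $K_p(n,k)$ with $|\cI|\ge\binom{n-1}{k-1}$, and assume $\cI$ is not a star (if $|\cI|=\binom{n-1}{k-1}$ and $\cI$ is intersecting, Hilton--Milner forces a star). Write $D$ for the number of disjoint pairs of $\cI$ inside $K(n,k)$, so $\cI$ survives in $K_p$ with probability $(1-p)^D\le e^{-pD}$. If $D>\tfrac{n-2k}{2(20C)^2 n}\binom{n-1}{k-1}\binom{n-k-1}{k-1}$, then even the crude union bound $2^{\binom{n}{k}}(1-p)^D$ is summable once $p$ exceeds a suitable multiple of $p_c$ — this is where the factor $\tfrac{n}{n-2k}$ enters, and why it is harmless for $k\le\tfrac{n}{6C}$. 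Otherwise Theorem~\ref{thm:removal} (applied with $\beta=D/(\binom{n-1}{k-1}\binom{n-k-1}{k-1})$) places $\cI$ within $O\bigl(\tfrac{n}{n-2k}D/\binom{n-k-1}{k-1}\bigr)$ of some star $\cS_x$, and here I would use the structural fact that, since $\cI$ is independent, every external set $F\in\cI\setminus\cS_x$ has all of its surviving $K_p$-neighbours that lie in $\cS_x$ contained in $\cS_x\setminus\cI$; combined with $|\cI|\ge|\cS_x|$ this gives
\[ \bigl|\cI\setminus\cS_x\bigr|\ \ge\ \bigl|\cS_x\setminus\cI\bigr|\ \ge\ \Bigl|\bigcup_{F\in\cI\setminus\cS_x}\bigl(N_{K_p}(F)\cap\cS_x\bigr)\Bigr|. \]
As each set $N_{K_p}(F)\cap\cS_x$ is a binomial-$\bigl(\binom{n-k-1}{k-1},p\bigr)$ subset of $\cS_x$ of typical size $(1\pm\eps)\log(n\binom{n-1}{k})\gg 1$, the probability that the union of $a$ of them has size at most $a$ is tiny; a union bound over the centre $x$, over $a=|\cI\setminus\cS_x|\ge 1$, and over the external family then rules out every such $\cI$ when $p\ge(1+\eps)p_c$ (respectively when $p\ge\tfrac{2Cn}{n-2k}p_c$, if $k$ is only assumed at most $\tfrac12(n-3)$), which is the desired conclusion.

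The main obstacle in both halves is the regime $k\approx\tfrac12 n$: there the slack $n-2k$ is small, $p_c$ becomes very small, the binomial coefficients $\binom{n-k-1}{k-1}$, $\binom{n-k-2}{k-2}$ and $\binom{n-1}{k-1}$ must be compared with Stirling-level precision, and the second-moment and union-bound sums are only barely convergent — which is exactly why the sharp threshold is obtained only for $k\le\tfrac{n}{6C}$ while, for the full range $k\le\tfrac12(n-3)$, one must settle for $p\ge\tfrac{2Cn}{n-2k}p_c$. A secondary point requiring care is that, because the threshold is meant to be sharp, $p_c$ has to be tracked with its exact constant (not merely its order of magnitude) throughout every estimate.
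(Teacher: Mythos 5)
Your overall architecture matches the paper's — a $0$-statement via extending stars to ``superstars'' and a $1$-statement via a union bound calibrated with Theorem~\ref{thm:removal} — but there are real gaps in both the far-from-star and the close-to-star halves of the $1$-statement, and the $0$-statement takes a detour the paper avoids.

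\textbf{The $0$-statement.} Your second-moment plan over all pairs $(y,G)$ is plausible, but it is an unnecessarily heavy hammer. The paper's proof fixes a single star $\cS_1$ and observes that the events $\cE_F$ (``$\cS_1\cup\{F\}$ is independent'') for different $F\notin\cS_1$ depend on mutually disjoint edge sets of $K(n,k)$, so they are genuinely independent. The probability that no $\cE_F$ holds is then exactly $(1-(1-p)^{\binom{n-k-1}{k-1}})^{\binom{n-1}{k}}$, which is estimated directly without touching variances or covariance case analysis.

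\textbf{The $1$-statement, far from a star.} The union bound $2^{\binom{n}{k}}(1-p)^D$ is too crude. Since one may restrict to independent families of size exactly $\binom{n-1}{k-1}$, the correct enumeration is $\binom{\binom{n}{k}}{\binom{n-1}{k-1}} \le (ne/k)^{\binom{n-1}{k-1}}$, and this is what the paper uses in \eqref{eqn:largeYbound}. Your count has $\binom{n}{k}$ in the exponent instead of $\binom{n-1}{k-1}$, which is larger by the factor $n/k$; for $k=o(\sqrt{n})$ (allowed, as only $k=\omega(1)$ is assumed), the quantity $p\,D\sim \frac{\log(n\binom{n-1}{k})}{\text{const}}\binom{n-1}{k-1}$ is nowhere near $\binom{n}{k}\log 2$, so $2^{\binom{n}{k}}(1-p)^D$ does not go to $0$ even at $p = \frac{2Cn}{n-2k}p_c$.

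\textbf{The $1$-statement, close to a star.} The covering observation $\bigl|\cS_x\setminus\cI\bigr|\ge\bigl|\bigcup_{F}(N_{K_p}(F)\cap\cS_x)\bigr|$ is correct, but your sketch does not establish that the resulting union bound closes over the entire range of $a=|\cI\setminus\cS_x|$. The union bound costs $\binom{\binom{n-1}{k}}{a}$, and your claim that ``the probability that the union of $a$ of them has size at most $a$ is tiny'' is only obviously strong enough for small $a$; once $a$ is of order $p\binom{n-k-1}{k-1}=(1+\eps)\log(n\binom{n-1}{k})$ and larger, the individual binomial neighbourhoods are typically of size $\approx a$ and their overlaps (which can be large, especially as $k\to n/(6C)$) make lower bounds on the union size delicate. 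The paper circumvents this exactly by a different idea in the small-$i$ regime: it passes to \emph{maximal} independent families and counts $Z_{i,j}$, using that each set $A\in\cS_x\setminus\cF$ must retain a surviving edge to $\cB=\cF\setminus\cS_x$ and that $\cB$ must lie inside $\bigcup_{A\in\cA}N(A)$. This gives the crucial $(jp)^i$ factor and the much smaller enumeration $\binom{i\binom{n-k}{k}}{j}$ for $\cB$, without which the sum over $1\le i\le t_0=\frac{\eps}{2}\binom{n-k-1}{k-1}$ does not appear to be controllable at $p=(1+\eps)p_c$. Your proposal lacks this maximality step, and it is the key missing idea for the sharp threshold.
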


Theorem~\ref{thm:sparseEKR} exhibits a sharp threshold for $k \le \frac{n}{6C}$, thus extending Theorem~\ref{thm:bnr14} to $k$ as large as linear in $n$.  Furthermore, when $k \le (\tfrac12 - \gamma)n$, as considered in~\cite{bbn14}, $\frac{n}{n-2k} \le (2\gamma)^{-1}$, and so Theorem~\ref{thm:sparseEKR} determines the critical probability up to a constant factor.  Finally, when $k$ is close to $\tfrac12 n$, we find that the sparse version of the Erd\H{o}s--Ko--Rado theorem still holds for very small edge probabilities; when $k = \tfrac12 (n-3)$, we almost surely have $\alpha(K_p(n,k)) = \binom{n-1}{k-1}$ even for $p = \Omega(n^{-1})$.

\subsection{Notation and organisation} \label{subsec:intro4}

We use standard set-theoretic and asymptotic notation.  We write $[n]$ for $\{1, 2, \hdots, n\}$, while $\binom{X}{k}$ is the family of all $k$-element subsets of a set $X$.  A non-standard piece of notation is $\disj(\cF)$, which represents the number of disjoint pairs in a set family $\cF$.  Given two functions $f$ and $g$ of some underlying parameter $n$, if $\lim_{n \rightarrow \infty} f(n) / g(n) = 0$, we write $f = o(g)$ and $g = \omega(f)$.  Finally, all logarithms are to the base $e$.

In Section~\ref{sec:removal} we prove our removal lemma, Theorem~\ref{thm:removal}.  We apply this result to the sparse Erd\H{o}s--Ko--Rado problem in Section~\ref{sec:kneser}, where we prove Theorem~\ref{thm:sparseEKR}.  The final section contains some concluding remarks and open problems.

\section{The removal lemma} \label{sec:removal}

In this section we prove our version of the removal lemma, Theorem~\ref{thm:removal}.  Our proof combines the work of Lov\'asz~\cite{lov79} on the spectrum of the Kneser graph with an analytic result of Filmus~\cite{fil14} regarding approximations of Boolean functions on $\binom{[n]}{k}$.  Before beginning with the proof, we shall introduce the necessary terminology.

Given a family of sets $\cF \subset \binom{[n]}{k}$, the \emph{characteristic function} $f: \binom{[n]}{k} \rightarrow \{0,1\}$ is a Boolean function indicating membership of the family, with $f(F) = 1$ if and only if $F \in \cF$.  We may embed $\binom{[n]}{k} \subset \{0,1\}^n$ into the $n$-dimensional hypercube, and thus think of $f$ as being defined on the $k$-uniform \emph{slice} of the cube $\{ (x_1, \hdots, x_n) \in \{0,1\}^n : \sum_i x_i = k \}$.  A function $f$ is \emph{affine} if $f(x_1, x_2, \hdots, x_n) = a_0 + \sum_{i \in [n]} a_i x_i$ for some constants $a_i$, $0 \le i \le n$.  We will equip this space of functions with the $L_2$-norm with respect to the uniform measure on $\binom{[n]}{k}$, defining
\[ \|f - g\|^2 = \Exp \left[ \card{f-g}^2 \right] = \frac{1}{\binom{n}{k}} \sum_{F \in \binom{[n]}{k}} \card{f(F) - g(F)}^2, \]
and say $f$ and $g$ are \emph{$\eps$-close} if $\|f - g\|^2 \le \eps$.  Finally, to avail of the spectral results, which are traditionally phrased in terms of matrices and vectors, we will abuse notation and identify a function $f: \binom{[n]}{k} \rightarrow \mathbb{R}$ with the vector $f = (f(F))_{F \in \binom{[n]}{k}}$ in $\mathbb{R}^{\binom{[n]}{k}}$.  Note that the $L_2$-norm above arises from the standard inner product on $\mathbb{R}^{\binom{[n]}{k}}$.

The first step of our proof is the following lemma, which transfers the problem into the analytic framework set up above.  The lemma shows that if a set family $\cF$ is as in the statement of Theorem~\ref{thm:removal}, then its characteristic function can be approximated well by an affine function.

\begin{lemma} \label{lem:spectral}
Let $n$, $k$ and $\l$ be positive integers satisfying $n>2k$, and let $\cF \subset \binom{[n]}{k}$ be a family of size $\card{\cF} = (\l - \al) \bekr$ with at most $\left(\binom{\l}{2}+\be\right)\bekr \bnkk$ disjoint pairs.  If ${f : \binom{[n]}{k} \rightarrow \{0,1\}}$ is the characteristic function of $\cF$, then $\| f - g \|^2 \le \left((2\l-1) \al + 2\be\right) \frac{k}{n-2k}$ for some affine function $g: \binom{[n]}{k} \rightarrow \mathbb{R}$.
\end{lemma}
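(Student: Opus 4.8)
The plan is to read the affine function $g$ directly off the spectral decomposition of the Kneser graph, so that no iteration or compactness argument is needed. Write $A$ for the adjacency matrix of $K(n,k)$, so that $f^{\mathsf T}Af = 2\disj(\cF)$, and recall Lov\'asz's determination~\cite{lov79} of its eigenvalues: they are $\lambda_i = (-1)^i\binom{n-k-i}{k-i}$ for $0 \le i \le k$, the top eigenspace $V_0$ is spanned by the all-ones vector $\one$, and $V_0 \oplus V_1$ is precisely the space of affine functions restricted to $\binom{[n]}{k}$. First I would fix the orthogonal decomposition $f = f_0 + f_1 + f'$, with $f_j \in V_j$ and $f'$ supported on $V_2 \oplus \dots \oplus V_k$, and put $g := f_0 + f_1$; then $g$ is affine and $\|f-g\|^2 = \|f'\|^2$, so it suffices to bound $\|f'\|^2$.

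Using the normalised inner product $\langle u,v\rangle = \Exp[uv]$, for which $A$ remains self-adjoint, we have $\langle f, Af\rangle = 2\disj(\cF)/\bnk$; and since $f$ is $\{0,1\}$-valued, $\|f\|^2 = \Exp[f] = \card{\cF}/\bnk = (\l-\al)\tfrac kn$ while $f_0 = \Exp[f]\,\one$, so $\|f_0\|^2 = (\l-\al)^2\tfrac{k^2}{n^2}$. The eigendecomposition yields $\langle f, Af\rangle = \sum_i\lambda_i\|f_i\|^2$, and the crucial point is that $\lambda_1 = -\binom{n-k-1}{k-1}$ is the \emph{strictly} most negative eigenvalue: the quantities $\card{\lambda_i}$ decrease in $i$ by Pascal's identity, so $\lambda_i \ge -\binom{n-k-2}{k-2}$ for every $i \ge 2$, whence $\lambda_i - \lambda_1 \ge \binom{n-k-1}{k-1} - \binom{n-k-2}{k-2} = \binom{n-k-2}{k-1}$, a quantity that is positive exactly because $n > 2k$. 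Substituting $\|f_1\|^2 = \|f\|^2 - \|f_0\|^2 - \|f'\|^2$ into $\langle f, Af\rangle \ge \lambda_0\|f_0\|^2 + \lambda_1\|f_1\|^2 - \binom{n-k-2}{k-2}\|f'\|^2$ and rearranging leaves
\[
\binom{n-k-2}{k-1}\|f'\|^2 \;\le\; \frac{2\disj(\cF)}{\bnk} + \binom{n-k-1}{k-1}\|f\|^2 - \Bigl(\binom{n-k}{k} + \binom{n-k-1}{k-1}\Bigr)\|f_0\|^2 .
\]

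Finally I would plug in the hypotheses and simplify. Using $\bnk = \tfrac nk\binom{n-1}{k-1}$, $\binom{n-k}{k} = \tfrac{n-k}{k}\binom{n-k-1}{k-1}$, $\binom{n-k-2}{k-1} = \tfrac{n-2k}{n-k-1}\binom{n-k-1}{k-1}$ and the assumed bound $\disj(\cF) \le \bigl(\binom{\l}{2}+\be\bigr)\binom{n-1}{k-1}\binom{n-k-1}{k-1}$, every binomial factor on the right-hand side above reduces to a single $\binom{n-k-1}{k-1}$, and the expression collapses (after the cancellation $\tfrac nk\cdot(\tfrac kn)^2 = \tfrac kn$ in the $\|f_0\|^2$ term) to $\binom{n-k-1}{k-1}\tfrac kn\bigl(\al(2\l-1-\al) + 2\be\bigr)$, where I have used the elementary identity $2\binom{\l}{2} + (\l-\al)(1-\l+\al) = \al(2\l-1-\al)$. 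Dividing through by $\binom{n-k-2}{k-1}$, then using $n-k-1 < n$ and $-\al^2 \le 0$, gives
\[
\|f'\|^2 \;\le\; \frac{k(n-k-1)}{n(n-2k)}\bigl(\al(2\l-1-\al)+2\be\bigr) \;\le\; \bigl((2\l-1)\al + 2\be\bigr)\frac{k}{n-2k} ,
\]
exactly the required bound (if the bracketed quantity is negative the displayed bound forces $\|f'\|^2 < 0$, so in that case no family $\cF$ of the given form exists and the statement is vacuous). The one genuine obstacle is the spectral input: it is essential to use not merely that $\lambda_1$ is the second eigenvalue but that it is separated from $\lambda_2, \dots, \lambda_k$ by the positive gap $\binom{n-k-2}{k-1}$; after that, the proof is routine manipulation of binomial coefficients.
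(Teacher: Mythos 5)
Your proof is correct and follows essentially the same route as the paper's: decompose $f$ orthogonally into its $V_0$, $V_1$, and higher-order components, use $f^{\mathsf T}Af = 2\disj(\cF)$ together with the Lov\'asz eigenvalues to bound the quadratic form from below, substitute $\|f_1\|^2 = \|f\|^2 - \|f_0\|^2 - \|f'\|^2$, and solve for $\|f'\|^2$. The only (inconsequential) variation is the eigenvalue bound on $V_2\oplus\cdots\oplus V_k$: you use $\lambda_i \ge -\binom{n-k-2}{k-2}$ (the negative of the largest magnitude among the remaining eigenvalues), whereas the paper uses the actual minimum $\lambda_3 = -\binom{n-k-3}{k-3}$, which is slightly sharper; both are more than enough to reach the stated conclusion, and your choice yields the clean Pascal simplification $\binom{n-k-1}{k-1}-\binom{n-k-2}{k-2}=\binom{n-k-2}{k-1}$.
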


To prove Lemma~\ref{lem:spectral}, we require some information on the spectrum of the Kneser graph.  Let $A$ denote the adjacency matrix of $K(n,k)$.  In his celebrated paper on the Shannon capacity of graphs, Lov\'asz~\cite{lov79} showed the eigenvalues of $A$ are $\lambda_i = (-1)^i \binom{n-k-i}{k-i}$ for $0 \le i \le k$.  Thus the largest eigenvalue is the degree of the vertices in the regular graph $K(n,k)$, $\lambda_0 = \binom{n-k}{k}$, while the smallest eigenvalue is $\lambda_1 = - \binom{n-k-1}{k-1}$.  The second smallest eigenvalue is $\lambda_3 = - \binom{n-k-3}{k-3}$.  Furthermore, the $\lambda_0$-eigenspace is one-dimensional, spanned by the constant function.  The $(n-1)$-dimensional $\lambda_1$-eigenspace is spanned by the functions $x_i - \frac{k}{n}$ (note that these functions are linearly dependent, as $\sum_i x_i \equiv k$).  Hence the span of the $\lambda_0$- and $\lambda_1$-eigenspaces is precisely the space of affine functions.  As $A$ is a real symmetric matrix, its eigenspaces are orthogonal.  Armed with these preliminaries, we can prove the lemma.

\begin{proof}[Proof of Lemma~\ref{lem:spectral}]
Given the characteristic vector $f$ of $\cF$, write $f = f_0 + f_1 + f_2$, where $f_0$ and $f_1$ are the projections of $f$ to the $\lambda_0$- and $\lambda_1$-eigenspaces respectively, and $f_2 = f - f_0 - f_1$.  By the orthogonality of the eigenspaces, we have $\| f \|^2 = \| f_0 \|^2 + \| f_1 \|^2 + \| f_2 \|^2$.  As $f$ is a Boolean function, $\| f \|^2 = \Exp[ f^2 ] = \Exp[ f] = \card{\cF} / \binom{n}{k} = (\l-\alpha) \frac{k}{n}$.  Thus, solving for $\|f_1\|^2$, we find $\| f_1 \|^2 = (\l - \alpha) \frac{k}{n} - \| f_0 \|^2 - \| f_2 \|^2$.  Furthermore, since the $\lambda_0$-eigenspace is spanned by the constant function, $f_0 \equiv \Exp[f] = (\l - \alpha) \frac{k}{n}$, and so $\|f_0\|^2 = \Exp \left[ f_0^2 \right] = (\l-\alpha)^2 \frac{k^2}{n^2}$.

As $A$ is the adjacency matrix of the Kneser graph $K(n,k)$, and $f$ is the characteristic function of the set family $\cF$, it follows that $f^T A f = 2 \disj(\cF)$.  Using our bound on the number of disjoint pairs in $\cF$,
\begin{align*}
	\left( \l^2 - \l + 2 \beta \right) \binom{n-1}{k-1} \binom{n-k-1}{k-1} &\ge 2 \disj(\cF) = f^T A f = f_0^T A f_0 + f_1^T A f_1 + f_2^T A f_2 \\
	&\ge \lambda_0 f_0^T f_0 + \lambda_1 f_1^T f_1 + \lambda_3 f_2^T f_2.
\end{align*}
We divide through by $\binom{n}{k}$ to normalise, obtaining
\[ \frac{\left( \l^2 - \l + 2 \beta \right) k}{n} \binom{n-k-1}{k-1} \ge \binom{n-k}{k} \| f_0 \|^2 - \binom{n-k-1}{k-1} \| f_1 \|^2 - \binom{n-k-3}{k-3} \| f_2 \|^2. \]
Dividing by $\binom{n-k-1}{k-1}$, substituting our expressions for $\| f_0 \|^2$ and $\| f_1 \|^2$, and simplifying gives
\begin{align*}
	\frac{2 \beta k}{n} &\ge \left[ 1 - \frac{(k-1)(k-2)}{(n-k-1)(n-k-2)} \right] \| f_2 \|^2 - \frac{\left( 2\l - 1\right) \alpha k}{n} + \frac{\al^2 k}{n}\\
	&= \frac{(n-2k)(n-3) }{(n-k-1)(n-k-2)}\| f_2 \|^2 - \frac{\left( 2 \l - 1 \right) \al k}{n} + \frac{\al^2 k}{n} \ge \frac{n-2k}{n} \| f_2 \|^2 - \frac{ \left( 2 \l - 1 \right)\alpha k}{n}.
\end{align*}
Rearranging, we deduce $\| f_2 \|^2 \le ((2 \l - 1) \alpha + 2 \beta)\frac{k}{n-2k}$. Recalling that $f_0 + f_1$ is spanned by the $\lambda_0$- and $\lambda_1$-eigenspaces, and hence affine, setting $g = f_0 + f_1$ gives the desired result.
\end{proof}

Lemma~\ref{lem:spectral} shows the characteristic function of $\cF$ must be close to an affine function, from which we shall deduce that $\cF$ itself is close to a union of stars.  Note that the characteristic function $g$ of the union of stars with centres $i \in S$ is simply $g(x_1, \hdots, x_n) = \max_{i \in S} x_i$, and is thus determined only by the coordinates in $S$.  The Friedgut--Kalai--Naor theorem~\cite{fkn02} states that if a Boolean function $f: \{0,1\}^n \rightarrow \{0,1\}$ on the entire hypercube is close to an affine function, then it is close to a function determined by at most one coordinate.  We shall make use of an analogous result for the $k$-uniform slices of the cube, due to Filmus~\cite{fil14}.

\begin{theorem}[Filmus] \label{thm:filmus}
For some constant $C > 1$ the following holds.  Suppose $2 \le k \le \tfrac12 n$ and $\eps < \frac{k}{128n}$.  For every Boolean function $f: \binom{[n]}{k} \rightarrow \{0,1\}$ that is $\eps$-close to an affine function, there is some set $S \subset [n]$ of size $\card{S} \le \max \left( 1, \frac{Cn \sqrt{\eps}}{k} \right)$ such that either $f$ or $1-f$ is $(C \eps)$-close to $\max_{i \in S} x_i$.
\end{theorem}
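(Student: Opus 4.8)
The plan is to reduce Theorem~\ref{thm:filmus} to a purely spectral statement and then adapt the Friedgut--Kalai--Naor argument to the slice $\binom{[n]}{k}$. Recall (as in the discussion preceding Lemma~\ref{lem:spectral}) that the affine functions on $\binom{[n]}{k}$ are exactly the sum of the top two eigenspaces of the Kneser graph, i.e.\ the ``degree $\le 1$'' part in the Johnson scheme decomposition $f = \sum_{d\ge 0} f^{=d}$. Since the best affine approximation of $f$ in $L_2$ is the orthogonal projection onto this subspace, the hypothesis that $f$ is $\eps$-close to affine is equivalent to $W := \sum_{d\ge 2}\|f^{=d}\|^2 \le \eps$, and it suffices to work with this. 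Write $g = f^{=0} + f^{=1}$, with $f^{=0}\equiv\mu := \Exp[f]$ and $f^{=1} = \sum_{i\in[n]} a_i\!\left(x_i-\tfrac{k}{n}\right)$, where $\sum_i a_i = 0$. I will show that either the level-$1$ weight $\sigma^2 := \|f^{=1}\|^2$ is dominated by $W$ (the near-constant case), or all but an $O(\eps)$ fraction of it sits on a set $S$ of at most $\max\!\left(1, \tfrac{Cn\sqrt\eps}{k}\right)$ coordinates; a Booleanity/rounding step then identifies $f$.

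The heart of the matter is a quartic inequality in the $a_i$. Since $f$ is $\{0,1\}$-valued we have $f = f^2$ as functions on the slice, so the Johnson scheme coefficients of $f$ and of $f^2$ coincide. Expanding $f^2$ in the slice basis and reading off the level-$2$ coefficients expresses each degree-$2$ coefficient of $f$, up to a $\mu$-dependent scalar factor that is bounded away from $0$ unless $\mu$ lies near an explicit critical value, as a weighted sum of the products $a_ia_j$ (the level-$1$-times-level-$1$ contribution, with weights given by Johnson scheme structure constants which---unlike on the hypercube---depend on $n$ and $k$) plus an error term built from products of coefficients of degree $\ge 2$. Squaring, summing over pairs $\{i,j\}$, and bounding the error via Parseval together with hypercontractive moment estimates for the low-degree part and the bound $W\le\eps$, one obtains, after normalisation, $\left(\sum_i a_i^2\right)^2 - \sum_i a_i^4 \le C'\eps\cdot\mathrm{poly}\!\left(\tfrac{n}{k}\right)$. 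The power of $n/k$ here comes from the slice structure constants, and the hypothesis $\eps<\tfrac{k}{128n}$ of the theorem is exactly what makes these error bounds, and the invertibility of the scalar factor, usable; the range in which $\mu$ is near the critical value is dealt with separately (for instance by passing to a sub-slice, or by a direct weight count).

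From the quartic inequality the dichotomy is routine. Comparing $\|f\|^2 = \Exp[f] = \mu$ with $\|f\|^2 = \mu^2 + \sigma^2 + W$ gives $\mu(1-\mu) = \sigma^2 + W \le \tfrac14$. If $\sigma^2 \le 100W$, then $\mu(1-\mu) \le 101\eps$, so $\min(\mu,1-\mu) = O(\eps)$ and $f$ is $(C\eps)$-close to the constant $0$ or the constant $1$, i.e.\ to $\max_{i\in S}x_i$ with $S = \emptyset$ or to its complement. Otherwise $\sigma^2 > 100W$, and applying the quartic inequality with $M := \max_i a_i^2$ and the crude bound $\sum_i a_i^4 \le M\sum_i a_i^2$ shows that the threshold set $S := \{i : a_i^2 \ge \tau\}$, for a suitable $\tau$, satisfies $|S| \le \max\!\left(1,\tfrac{Cn\sqrt\eps}{k}\right)$ while $\sum_{i\notin S} a_i^2 = O(\eps)$; hence $f^{=1}$, and therefore $g$ and $f$, is $O(\eps)$-close to a function of the coordinates in $S$ alone. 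Finally, because $f$ is Boolean and simultaneously $O(\eps)$-close to an affine function and to a function of these few coordinates, rounding its restriction to the $S$-coordinates---and using that the surviving coefficients $a_i$ must all have the same sign, since mixed signs would create too much weight above level $1$ once $|S|\ge 2$---identifies the target as $\max_{i\in S}x_i$ or its complement $1-\max_{i\in S}x_i$, giving the conclusion for $f$ or for $1-f$.

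The main obstacle is the second step: transplanting the FKN quartic argument to the slice. Two features make this genuinely harder than on the hypercube. First, products of low-degree functions on $\binom{[n]}{k}$ spread across several Johnson scheme levels with structure constants depending on $n$ and $k$ (on the cube they are just $0$ or $1$), and these must be tracked carefully---they are exactly what produces the factor $n/k$ in the bound on $|S|$ and the restriction $\eps<\tfrac{k}{128n}$. Second, the degree-$\le1$ approximant $g$ is real-valued and unbounded, so the error terms cannot be controlled by a trivial sup-norm estimate as on the cube; one needs hypercontractive moment comparisons (``level-$d$ hypercontractivity'') for low-degree functions on the slice that stay effective even when $k$ is small compared with $n$. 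An alternative that makes the bound $|S|\lesssim n\sqrt\eps/k$ transparent---since $\max_{i\in S}x_i$ stays $\eps$-close to affine exactly when $(k/n)|S|\lesssim\sqrt\eps$---is to transfer the whole problem to the $\tfrac{k}{n}$-biased hypercube via the standard slice/biased-cube coupling and apply the $p$-biased form of the Friedgut--Kalai--Naor theorem; the price is verifying that this coupling preserves Booleanity and the degree decomposition up to $O(\eps)$ errors.
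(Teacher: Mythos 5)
The paper does not prove Theorem~\ref{thm:filmus}; it is cited verbatim from Filmus~\cite{fil14}, so there is no in-paper proof to compare against. Your sketch has the right skeleton: affine functions are the span of the $\lambda_0$- and $\lambda_1$-eigenspaces, so ``$\eps$-close to affine'' means the weight above level~$1$ is at most~$\eps$; the bound $|S|\lesssim n\sqrt\eps/k$ is exactly what keeps a union of $|S|$ stars $O(\eps)$-close to affine; and an FKN-type quartic inequality extracted from $f=f^2$ is the natural engine for the concentration step.

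That said, what you have is a road map rather than a proof, and the points you flag as ``hard'' are precisely where the content lies. The claimed intermediate bound $\bigl(\sum_i a_i^2\bigr)^2-\sum_i a_i^4\le C'\eps\cdot\mathrm{poly}(n/k)$ cannot be used as written: the theorem's conclusion demands a final error of $O(\eps)$, not $O(\eps\cdot\mathrm{poly}(n/k))$, so the exponent of $n/k$ and the way the hypothesis $\eps<k/(128n)$ absorbs it must be made explicit---this is one of the delicate computations in Filmus's actual paper. The case where the $\mu$-dependent scalar is near its critical value is dismissed with ``a sub-slice or a direct weight count,'' which is not an argument. Hypercontractivity on the slice degrades as $k/n\to 0$, yet the theorem is claimed for all $k\ge 2$; you acknowledge this obstacle but do not resolve it, and it is exactly the regime that matters in the present paper. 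The rounding step---that a Boolean function close to a signed affine combination of a few coordinates is close to $\max_{i\in S}x_i$ or its complement, rather than some other Boolean function of $S$---is asserted via a sign heuristic and left unverified. Finally, the alternative route of coupling the slice to the $p$-biased cube and quoting biased FKN is sensible for $k=\Theta(n)$, but the coupling loses control when $k=o(n)$, which is again the regime of interest. The outline is sound and consistent with Filmus's approach, but the theorem's substance remains unproved in the proposal.
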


We now have all the necessary ingredients to prove the removal lemma.

\begin{proof}[Proof of Theorem~\ref{thm:removal}]
Set $\eps = ((2\l - 1)\alpha + 2 \beta)\frac{k}{n-2k}$, and take $C$ as in Theorem~\ref{thm:filmus}.  By our bounds on $\alpha$ and $\beta$, $\eps < \frac{k}{128C^2 n}$.  If $\cF$ is as in the statement of the theorem, then by Lemma~\ref{lem:spectral} its characteristic function $f$ is $\eps$-close to an affine function.  By Theorem~\ref{thm:filmus}, there is some $S \subset [n]$ such that $f$ or $1-f$ is $(C\eps)$-close to $\max_{i \in S} x_i$.  Without loss of generality, we may assume $S = [s]$, where $s \le \max \left(1 , \frac{Cn \sqrt{\eps}}{k} \right)$.  Let $g_s = \max_{i \in [s]} x_i$, and let $\cG_s = \binom{[n]}{k} \setminus \binom{[n] \setminus [s]}{k}$ be the family corresponding to this characteristic function.

Note that $\| f - g_s \|^2 = \card{\cF \Delta \cG_s} / \binom{n}{k}$, since for any set $F \in \binom{[n]}{k}$ we have
\[ \card{ f(F) - g_s(F) } = \left\{ \begin{array}{ll}
	1 & \textrm{if } F \in \cF \Delta \cG_s \\
	0 & \textrm{otherwise}
	\end{array} \right. . \]
Hence we must have $\card{\cF \Delta \cH} \le C \eps \binom{n}{k}$ for $\cH = \cG_s$ or $\cH = \overline{\cG_s}$, depending on whether it is $f$ or $1-f$ that is $(C \eps)$-close to $g_s$.  There are six possibilities to consider:
\begin{equation*}
\begin{tabular}{rlrlrl}
	(i)&$\cH = \cG_s, s \le \l - 1$ \hspace{1.0cm} &(ii)&$\cH = \cG_s, s \ge \l + 1$ \hspace{1.0cm} &(iii)&$\cH = \overline{\cG_0}$ \\
	(iv)&$\cH = \overline{\cG_s}, s \ge 2$ &(v)&$\cH = \overline{\cG_1}$ &(vi)&$\cH = \cG_{\l}$
\end{tabular}
\end{equation*}
Since $\cG_{\l}$ is the union of $\l$ stars, we wish to show that (vi) must hold.  We first consider the sizes of $\cF$ and $\cH$ to eliminate cases (i)-(iv).  Recall that $\card{\cF} = (\l - \alpha) \binom{n-1}{k-1}$, and, by our bound on $\alpha$, $\l - \alpha \in ( \l - \frac18, \l + \frac18)$.  Since $\card{ \card{\cF} - \card{\cH} } \le \card{ \cF \Delta \cH} \le C \eps \binom{n}{k} < \frac18 \binom{n-1}{k-1}$, we must have $\left( \l - \frac14 \right) \binom{n-1}{k-1} \le \card{\cH} \le \left( \l + \frac14 \right) \binom{n-1}{k-1}$.

We have $\card{\cG_s} \le s \binom{n-1}{k-1}$, which is too small if $s \le \l - 1$.  On the other hand, observe that $\cG_s$, the union of $s$ stars, grows with $s$.  Thus, when $s \ge \l + 1$,
\[ \card{\cG_s} \ge \card{\cG_{\l + 1}} \ge (\l + 1) \binom{n-1}{k-1} - \binom{\l + 1}{2} \binom{n-2}{k-2} \ge \left(\l + 1 - \frac{\l^2k}{n} \right) \binom{n-1}{k-1} \ge \left( \l + \frac12 \right) \binom{n-1}{k-1}, \]
which is too large.  This rules out cases (i) and (ii).  We also have $\card{\overline{\cG_0}} = \binom{n}{k} = \frac{n}{k} \binom{n-1}{k-1} \ge 2 \l^2 \binom{n-1}{k-1}$, which is again too large, ruling out case (iii) as well.

To handle case (iv), we show that $\overline{\cG_s}$ is too large when $s \ge 2$.  Since $\card{\overline{\cG_s}} = \binom{n-s}{k}$ is decreasing in $s$, it suffices to take $s = \frac{C n \sqrt{\eps}}{k}$.  We indeed have too many sets, as
\[ \card{\overline{\cG_s}} = \binom{n-s}{k} \ge \left( 1 - \frac{sk}{n} \right) \binom{n}{k} = \left( 1 - C \sqrt{\eps} \right) \frac{n}{k} \binom{n-1}{k-1} > \frac{3 \l^2}{2} \binom{n-1}{k-1} \ge \left( \l + \frac12 \right) \binom{n-1}{k-1}. \]

The above argument does not immediately rule out case (v), since if $s = \max \left( 1, \frac{C n \sqrt{\eps}}{k} \right) = 1$, we may not assume $s = \frac{C n \sqrt{\eps}}{k}$.  However, the family $\overline{\cG_1}$ is still too large when $\l \ge 2$, as
\[ \card{\overline{\cG_1}} = \binom{n-1}{k} = \frac{n-k}{k} \binom{n-1}{k-1} \ge \left( 2\l^2 - 1 \right) \binom{n-1}{k-1} > \left( \l + \frac12 \right) \binom{n-1}{k-1}. \]

To rule out case (v) when $\l = 1$, we consider the number of disjoint pairs in $\cF$.  Note that each of the $\binom{n-1}{k}$ sets in $\overline{\cG_1}$ is disjoint from $\binom{n-k-1}{k}$ other sets in $\overline{\cG_1}$, and hence $\disj(\overline{\cG_1}) = \frac12 \binom{n-1}{k} \binom{n-k-1}{k}$.  Moreover, removing $t$ sets from $\overline{\cG_1}$ can account for at most $t \binom{n-k-1}{k}$ disjoint pairs.  If $\cF$ were close to $\overline{\cG_1}$, then $\card{\overline{\cG_1} \setminus \cF} \le C\eps \binom{n}{k}$, and so
\[ \disj(\cF) \ge \disj(\cF \cap \overline{\cG_1}) \ge \left( \tfrac12 - \tfrac{C \eps n}{n-k} \right) \binom{n-1}{k} \binom{n-k-1}{k} > \left( \tfrac12 - 2C \eps \right) \binom{n-1}{k} \binom{n-k-1}{k}. \]
On the other hand, we assumed $\cF$ has at most $\beta \binom{n-1}{k-1} \binom{n-k-1}{k-1}$ disjoint pairs, so we must have $\beta \ge \left( \frac12 - 2C \eps \right) \frac{(n-k)(n-2k)}{k^2} > \frac{n-2k}{2n}$, contradicting our bound on $\beta$.

Thus we are only left with case (vi), where $\cH$ is the union of $\l$ stars $\cG_{\l}$, and, as required, we have ${\card{ \cF \Delta \cG_{\l}} \le C \eps \binom{n}{k} = C((2 \l - 1) \alpha + 2 \beta)\frac{n}{n-2k} \binom{n-1}{k-1}}$.
\end{proof}

\section{Independence number of random Kneser subgraphs} \label{sec:kneser}

In this section we prove Theorem~\ref{thm:sparseEKR}, establishing an analogue of the Erd\H{o}s--Ko--Rado theorem for sparse random subgraphs of the Kneser graph.  We will show that below the threshold, there is with high probability an independent \emph{superstar}: some star $\cS$ and a set $F \notin \cS$ such that $\cS \cup \{ F \}$ is independent in $K_p(n,k)$.  The upper bound on the critical probability essentially follows from a union bound over all potential independent sets, where we will be able to take advantage of the fine control afforded to us by Theorem~\ref{thm:removal} to obtain sharp results when $k$ is large.

\begin{proof}[Proof of Theorem~\ref{thm:sparseEKR}]
First we establish the lower bound on the critical probability.  Suppose ${\eps = \omega(k^{-1})}$ and $p \le (1 - \eps) p_c$.  We wish to show that with high probability, stars can be extended to independent superstars in $K_p(n,k)$.  Let $\cS$ be the star with centre $1$, and for every $1 \notin F \in \binom{[n]}{k}$ let $\cE_F$ be the event that $\cS \cup \{ F \}$ is independent in $K_p(n,k)$.

Note that $F$ is disjoint from $\binom{n-k-1}{k-1}$ sets in $\cS$, and for $\cE_F$ to hold none of these edges can appear in $K_p(n,k)$.  Thus $\Prb(\cE_F) = (1-p)^{\binom{n-k-1}{k-1}}$.  Moreover, the events $\{ \cE_F : 1 \notin F \}$ depend on mutually disjoint sets of edges of $K(n,k)$, and are thus independent.  Hence we can bound the probability that the stars are the largest independent sets of $K_p(n,k)$ by
\[ \Prb\left( \alpha(K_p(n,k)) = \binom{n-1}{k-1} \right) \le \Prb\left( \cap_F \overline{\cE_F} \right) =  \left(1 - (1-p)^{\binom{n-k-1}{k-1}} \right)^{\binom{n-1}{k}}. \]

This bound is increasing in $p$, so it suffices to take $p = (1 - \eps) p_c = \frac{(1 - \eps) \log \left( n \binom{n-1}{k} \right)}{\binom{n-k-1}{k-1}}$.  As $n \ge 2k + 2$, $p = O(n^{-1}) = o(\eps)$, and hence $(1-p)^{\binom{n-k-1}{k-1}} \ge \mathrm{exp} \left( - p(1 + p) \binom{n-k-1}{k-1} \right) \ge \left(n \binom{n-1}{k} \right)^{-(1 - \eps/2)}$.  Thus
\[ \left(1 - (1-p)^{\binom{n-k-1}{k-1}} \right)^{\binom{n-1}{k}} \le \mathrm{exp} \left( - \binom{n-1}{k} (1-p)^{\binom{n-k-1}{k-1}} \right) \le \mathrm{exp} \left( - n^{-1} \binom{n-1}{k} ^{\eps/2} \right) = o(1), \]
since $\eps = \omega \left(k^{-1} \right)$.  Hence for $p \le (1 - \eps) p_c$ we have $\alpha(K_p(n,k)) > \binom{n-1}{k-1}$ with high probability.

We now seek an upper bound on the critical probability.  By monotonicity, it suffices to consider $p$ as small as possible.  To begin, we will prove the coarse threshold.  Let $C$ be the (absolute) constant from Theorem~\ref{thm:removal}, and take $p = \zeta p_c$, where $\zeta = \frac{2Cn}{n-2k}$.  For such $p$, we wish to show the only maximum independent sets of $K_p(n,k)$ are the stars.  To this end, we define the following random variables:
\begin{align*}
	X &= \card{ \left\{ \textrm{independent superstars } \cF: \cF = \cS \cup \{F \} \textrm{ for some star } \cS, F \notin \cS \right\} } \textrm{ and} \\
	Y_i &= \card{ \left\{ \textrm{independent }\cF: \card{\cF} = \binom{n-1}{k-1}, \min_{\cS \textrm{ a star}} \card{ \cS \setminus \cF } = i \right\} }, \; 1 \le i \le \binom{n-1}{k-1}.
\end{align*}

$X$ counts the number of independent superstars in $K_p(n,k)$.  If $X = 0$, the stars are all maximal independent sets.  If we further have $Y_i = 0$ for all $1 \le i \le \binom{n-1}{k-1}$, then there are no independent sets of size $\binom{n-1}{k-1}$ that are not stars, and thus the stars are the only maximum independent sets in $K_p(n,k)$.  Hence our task is to show $X + \sum_i Y_i = 0$ with high probability.  By the union bound, it suffices to show $\Prb(X > 0) + \sum_i \Prb(Y_i > 0) = o(1)$.

We begin by estimating $\Prb(X > 0)$, which we can bound by $\Exp[X]$.  There are $n$ choices for the star $\cS$, $\binom{n-1}{k}$ choices for the set $F \notin \cS$, and, for each such configuration, $\binom{n-k-1}{k-1}$ edges that should not appear in $K_p(n,k)$, which occurs with probability $(1 - p)^{\binom{n-k-1}{k-1}} \le \mathrm{exp} \left( - \zeta p_c \binom{n-k-1}{k-1} \right)$. Thus
\begin{equation} \label{eqn:Xbound}
\Exp[X] \le n \binom{n-1}{k} \mathrm{exp} \left( - \zeta p_c \binom{n-k-1}{k-1} \right) = \left( n \binom{n-1}{k} \right)^{1 - \zeta} = o(1),
\end{equation}
even for $\zeta$ as small as $1 + \omega \left( k^{-1} \right)$.

To analyse $\Prb(Y_i > 0)$, we shall distinguish between two different cases: families that are close to a star, and families far from a star.  For the first case, we assume $1 < i \le t_1 = \frac{1}{400C} \binom{n-1}{k-1}$.  The families $\cF$ counted by $Y_i$ have size $\binom{n-1}{k-1}$ and $\card{\cF \Delta \cS} = 2 \card{\cS \setminus \cF} \ge 2i$ for every star $\cS$.  By applying Theorem~\ref{thm:removal} with $\alpha = 0$ and $\beta = \frac{i(n-2k)}{Cn \binom{n-1}{k-1}}$, it follows that $\disj(\cF) \ge \frac{i(n-2k)}{Cn} \binom{n-k-1}{k-1}$.  For $\cF$ to be independent in $K_p(n,k)$, none of these edges can appear, which occurs with probability $(1 - p)^{\disj(\cF)} \le \left( n \binom{n-1}{k} \right)^{ - \zeta i (n-2k)/ (Cn)}$.

We now take a union bound over all possible choices of $\cF$.  We know there is some star $\cS$ such that $\card{\cS \setminus \cF} = i$.  There are $n$ choices for the star $\cS$, $\binom{\binom{n-1}{k-1}}{i}$ choices for the $i$ sets in $\cS \setminus \cF$, and $\binom{\binom{n-1}{k}}{i}$ choices for the $i$ sets in $\cF \setminus \cS$.  Hence there are at most $n \binom{ \binom{n-1}{k-1} }{i} \binom{ \binom{n-1}{k} }{i} \le n \left( \frac{k e^2 \binom{n-1}{k}^2}{(n-k) i^2} \right)^i$ families $\cF$ that can be counted by $Y_i$.  Thus we have
\begin{equation} \label{eqn:medYbound}
\sum_{i = 1}^{t_1} \Prb(Y_i > 0) \le \sum_{i=1}^{t_1} n \left( \frac{ke^2 \binom{n-1}{k}^2 }{(n-k) i^2} \left(n \binom{n-1}{k} \right)^{- \zeta (n-2k)/(Cn) } \right)^i \le \sum_{i=1}^{t_1} \frac{ne^{2i}}{(ni)^{2i}} = o(1).
\end{equation}
where the second inequality follows from our choice of $\zeta = \frac{2Cn}{n-2k}$.

Finally, we bound $\Prb(Y_i > 0)$ when $i > t_1$.  Applying Theorem~\ref{thm:removal} with $\alpha = 0$ and $\beta = \frac{n-2k}{(20C)^2 n}$, any family $\cF$ counted by $\sum_{i > t_1} Y_i$ must have $\disj(\cF) \ge \frac{n-2k}{(20C)^2 n} \binom{n-1}{k-1} \binom{n-k-1}{k-1}$.  Hence the probability of such an $\cF$ being independent in $K_p(n,k)$ is $(1 - p)^{\disj(\cF)} \le \left(n \binom{n-1}{k} \right)^{- \frac{\zeta (n-2k)}{(20C)^2 n} \binom{n-1}{k-1}}$.  Recalling our choice of $\zeta = \frac{2Cn}{n-2k}$, we apply a trivial union bound, summing over all $\binom{\binom{n}{k}}{\binom{n-1}{k-1}} \le \left( \frac{ne}{k} \right)^{\binom{n-1}{k-1}}$  families of size $\binom{n-1}{k-1}$ to find, when $k \ge 600C$,
\begin{equation} \label{eqn:largeYbound}
\sum_{i = t_1+1}^{\binom{n-1}{k-1}} \Prb(Y_i > 0) \le \left( \frac{ne}{k} \left( n \binom{n-1}{k} \right)^{- 1/ (200C)} \right)^{\binom{n-1}{k-1}} \le \left( \frac{ne}{k} \left( \frac{k}{n} \right)^{k/(200C)} \right)^{\binom{n-1}{k-1}} = o(1).
\end{equation}
Combining~\eqref{eqn:Xbound},~\eqref{eqn:medYbound}, and~\eqref{eqn:largeYbound}, we find that when $p \ge \zeta p_c$, $\Prb(X > 0) + \sum_i \Prb(Y_i > 0) = o(1)$, and so for such $p$, the maximum independent sets in $K_p(n,k)$ are precisely the stars.

We now prove the sharp threshold result, for which we must show that the same conclusion holds when $k \le \frac{n}{6C}$ and $\zeta = 1 + \eps$, for some small $\eps = \omega(k^{-1})$.  As previously stated, the bound from~\eqref{eqn:Xbound} holds with this smaller value of $\zeta$.  However, bounding $\sum_i \Prb(Y_i > 0)$ requires more careful analysis.  We now split the sum into three parts: $1 \le i \le t_0 = \frac{\eps}{2} \binom{n-k-1}{k-1}$, $t_0 + 1 \le i \le t_1 = \frac{1}{400C} \binom{n-1}{k-1}$, and $t_1 + 1 \le i \le \binom{n-1}{k-1}$.

For the latter two parts, we modify slightly our analysis of the above bounds.  When a family is only moderately close to a star, with $t_0 + 1 \le i \le t_1$, we again use the bound in~\eqref{eqn:medYbound}.  We begin by observing that $i > t_0 = \frac{\eps}{2} \binom{n-k-1}{k-1} = \frac{\eps k}{2 (n-k)} \binom{n-k}{k}$, and so
\[ \frac{k e^2 \binom{n-1}{k}^2}{(n-k)i^2} \le \frac{4 e^2 (n-k) \binom{n-1}{k}^2}{\eps^2 k \binom{n-k}{k}^2} \le \frac{4e^2(n-k)}{\eps^2 k} \left( 1 + \frac{k}{n-2k} \right)^{2k} \le \frac{4e^2 (n-k) e^{k/(2C)}}{\eps^2 k} .\]
Since $k \le \frac{n}{6C}$, we may also bound $\left( n \binom{n-1}{k} \right)^{ - \zeta (n-2k)/(Cn)} \le \left( \frac{k}{n} \right)^{k/(2C)}$, and so the bases of the exponential summands in~\eqref{eqn:medYbound} are at most
\[ \frac{4e^2(n-k)}{\eps^2 k} \left( \frac{e k}{n} \right)^{k/(2C)} \le \frac{4e^3}{\eps^2} \left( \frac{e}{6C} \right)^{k/(2C) - 1} \le \frac{4e^3}{\eps^2 C^{k/(2C)-1}} \left( \frac{e}{6} \right)^2 < \frac{e^3}{\eps^2 C^{k/(3C)}} < \frac12, \]
as we may assume $k \ge 6C$ and $\eps \ge 10 C^{-k/(6C)}$.  This then implies $\sum_{i = t_0 + 1}^{t_1} \Prb(Y_i > 0) = o(1)$.

For families that are far from a star, we can re-estimate the upper bound in~\eqref{eqn:largeYbound} to show
\[ \sum_{i > t_1} \Prb(Y_i > 0) \le \left( \frac{ne}{k} \left(n \binom{n-1}{k} \right)^{ - \frac{\zeta (n-2k)}{(20C)^2 n} } \right)^{\binom{n-1}{k-1}} \le \left( \frac{ne}{k} \left( \frac{k}{n} \right)^{ \frac{k}{2(20C)^2}} \right)^{\binom{n-1}{k-1}} \le \left( \frac{ek}{n} \right)^{\binom{n-1}{k-1}} = o(1), \]
assuming $k \ge (40C)^2$.

To complete the proof of the sharp threshold, we must demonstrate that we are unlikely to obtain independent families that are very close to stars, with $1 \le i \le t_0$.  In this range, we repeat the analysis of Bollob\'as, Narayanan and Raigorodskii in~\cite{bnr14}, and instead consider \emph{maximal} independent families in $K_p(n,k)$.

For $j \ge i \ge 0$, let $Z_{i,j}$ denote the number of maximal independent families $\cF$ such that there is some star $\cS$ with $\card{\cS \setminus \cF} = i$ and $\card{\cF \setminus \cS} = j$.  Observe that if $\cF$ is a family counted by the random variable $Y_i$, then for any maximal independent family $\cF'$ containing $\cF$, the family $\cF'$ must be counted by $Z_{i',j}$ for some $i' \le i$ and $j \ge i$.  Furthermore, if $i' = 0$, this family $\cF'$ contains superstars.  This shows
\[ \bigcup_{1 \le i \le t_0} \left\{ Y_i > 0 \right\} \subseteq \left\{ X > 0 \right\} \cup \bigcup_{\substack{1 \le i \le t_0 \\ j \ge i}} \left\{ Z_{i,j} > 0 \right\}, \]
and so $\sum_{i=1}^{t_0} \Prb(Y_i > 0) \le \Prb(X > 0) + \sum_{i=1}^{t_0} \sum_{j \ge i} \Prb(Z_{i.j} > 0)$.  We already have $\Prb( X > 0 ) = o(1)$, and hence it suffices to show $\sum_{i = 1}^{t_0} \sum_{j \ge i} \Exp [Z_{i,j} ] = o(1)$.

Let $\cF$ be a maximal independent family counted by $Z_{i,j}$.  Let $\cS$ be the corresponding star, $\cA = \cS \setminus \cF$, and $\cB = \cF \setminus \cS$.  Thus we have $\card{\cA} = i$ and $\card{\cB} = j$.  By virtue of $\cF$ being independent, all of the edges between $\cB$ and $\cS \setminus \cA$ must be missing in $K_p(n,k)$.  As $\cF$ is maximal, each $A \in \cA$ must have an edge to some $B \in \cB$, for otherwise $\cF \cup \{ A \}$ would be a larger independent family.  In particular, this implies $\cB$ is a subset of the union of the neighbourhoods in $K(n,k)$ of $A \in \cA$.

There are thus $n$ choices for the star $\cS$, $\binom{\binom{n-1}{k-1}}{i}$ choices for $\cA$, and at most $\binom{i \binom{n-k}{k}}{j}$ choices for $\cB$.  Each $A \in \cA$ must retain at least one of its edges to $\cB$, which occurs with probability at most $jp$, independently for each of the $i$ sets.  Furthermore, as every $B \in \cB$ has $\binom{n-k-1}{k-1}$ neighbours in $\cS$, there are at least $j \left( \binom{n-k-1}{k-1} - i\right)$ edges between $\cB$ and $\cS \setminus \cA$ that must be missing.  This gives
\[ \Exp [ Z_{i,j} ] \le n \binom{ \binom{n-1}{k-1}}{i} \binom{i \binom{n-k}{k}}{j} (jp)^i \left(1 - p \right)^{j \left(\binom{n-k-1}{k-1} - i\right)} = z_{i,j}. \]

We first observe that, for $i \le t_0 = \frac{\eps}{2} \binom{n-k-1}{k-1}$ and $j \ge i$,
\begin{align*}
\frac{z_{i,j+1}}{z_{i,j}} = \frac{\binom{i \binom{n-k}{k}}{j+1} (j+1)^i}{\binom{i \binom{n-k}{k}}{j} j^i } \left( 1- p \right)^{\binom{n-k-1}{k-1} - i} &\le \frac{\left( i \binom{n-k}{k} - j \right) e}{j+1}\left( 1- p \right)^{\left(1 - \frac{\eps}{2} \right) \binom{n-k-1}{k-1}} \\
&\le e \binom{n-k}{k} \left( n \binom{n-1}{k} \right)^{- (1 + \frac{\eps}{4})} = o(1),
\end{align*}
since $p = \zeta p_c = \frac{ (1 + \eps) \log \left( n \binom{n-1}{k} \right) }{\binom{n-k-1}{k-1}}$.  Hence $\sum_{j \ge i} z_{i,j} \le 2 z_{i,i}$.  Thus
\begin{align*}
\sum_{i=1}^{t_0} \sum_{j \ge i} \Exp[Z_{i,j}] \le 2 \sum_{i=1}^{t_0} z_{i,i} &= 2n \sum_{i=1}^{t_0} \binom{\binom{n-1}{k-1}}{i} \binom{i \binom{n-k}{k}}{i} (ip)^i \left( 1 - p \right)^{i \left(\binom{n-k-1}{k-1} - i \right)} \\
	&\le 2n \sum_{i=1}^{t_0} \left( e^2 \binom{n-1}{k-1} \binom{n-k}{k} p \left(1 - p\right)^{\left( 1 - \frac{\eps}{2} \right) \binom{n-k-1}{k-1}} \right)^i \\
	&\le 2n \sum_{t=1}^{t_0} \left( e^2 \binom{n-1}{k-1} \binom{n-k}{k} \frac{ (1 + \eps) \log \left( n \binom{n-1}{k} \right) }{\binom{n-k-1}{k-1}} \left( n \binom{n-1}{k} \right)^{- (1 + \frac{\eps}{4} )} \right)^i \\
	&= 2n \sum_{i=1}^{t_0} \left( \frac{( 1 + \eps ) e^2 \log \left( n \binom{n-1}{k} \right)}{n \left( n \binom{n-1}{k} \right)^{\frac{\eps}{4} }} \right)^i = o(1),
\end{align*}
completing the proof for the sharp threshold.
\end{proof}

\section{Concluding remarks} \label{sec:conc}

In this paper, we built on the work of Filmus~\cite{fil14} to develop a removal lemma for large set families with few disjoint pairs.  We then used this to determine the threshold for random Kneser subgraphs having the Erd\H{o}s--Ko--Rado property, thus answering a question of Bollob\'as, Narayanan and Raigorodskii~\cite{bnr14}.

Rather than the probabilistic problem considered above, one might instead ask the corresponding extremal question: how sparse can a spanning subgraph $G$ of $K(n,k)$ be if $\alpha(G) = \binom{n-1}{k-1}$?  A lower bound can be obtained by requiring the stars to be maximal independent sets.  For every set $F \in \binom{[n]}{k}$, and every element $x \notin F$, there must be an edge between $F$ and the star $\cS_x$ centred at $x$, for otherwise $\cS_x \cup \{F\}$ would be an independent set of size $\binom{n-1}{k-1} + 1$.  As each edge $\{F, F'\}$ covers $k$ stars, it follows that $F$ must have degree at least $\frac{n-k}{k}$, and hence $G \subseteq K(n,k)$ must have at least $\frac{n-k}{2k} \binom{n}{k}$ edges.

Perhaps surprisingly, this simple lower bound can be tight.  If $k$ divides $n$, then Baranyai's Theorem~\cite{bar75} gives a partition of $\binom{[n]}{k}$ into perfect matchings.  In the Kneser graph, this corresponds to a partition of the vertices into cliques of size $\frac{n}{k}$.  Let $G$ be the subgraph consisting only of these cliques.  Any independent set in $G$ can contain at most one vertex from each clique, and hence $\alpha(G) \le \frac{k}{n} \binom{n}{k} = \binom{n-1}{k-1}$.  Furthermore, $G$ is $\frac{n-k}{k}$-regular, matching the lower bound given previously.

Theorem~\ref{thm:sparseEKR} shows that for this bound on the independence number to hold in random graphs, they must be denser by a factor of at least $\log \left( n \binom{n-1}{k} \right)$.  However, these random graphs have the additional property that the only maximum independent sets are the stars, which is not the case in the construction given above.  One might be interested in the extremal problem with this stricter requirement, or in the case when $k$ does not divide $n$.

Returning to the random setting, Devlin and Kahn~\cite{dk15} have recently established threshold results when $k \sim \frac{n}{2}$.  It remains to exhibit a sharp threshold around $p_c$ for $k > \frac{n}{6C}$.  We believe that, perhaps for smaller $k$, a more precise hitting time result may hold.  Consider the random process where one removes edges from the Kneser graph $K(n,k)$ one at a time, selecting at each step an edge uniformly at random from those that remain.  Is it true that, with high probability, $\alpha(G) > \binom{n-1}{k-1}$ precisely when a superstar is born?  The fact that the lower bound from the sharp threshold comes from these superstars suggests this might be the case.

More generally, given how central intersecting families are to extremal set theory, we believe the removal lemma should find many other applications.  In a forthcoming paper with Balogh, Liu and Sharifzadeh, we obtain some supersaturation results using the removal lemma with $\l \ge 1$, extending the results of~\cite{dgs14}.  We hope that the lemma might prove useful for other research directions as well.

\paragraph{Acknowledgements} We would like to thank Jozsef Balogh, Hong Liu and Maryam Sharifzadeh for suggesting the generalisation of the removal lemma to larger families, with $\l \ge 2$.  We also extend our thanks to the anonymous referees for their many helpful suggestions.

\end{document}